\numberwithin{equation}{section}
\theoremstyle{plain}
\newtheorem{proposition}{Proposition}[section]
\newtheorem{theorem}[proposition]{Theorem}		
\newtheorem*{theorem*}{Theorem}		
\newtheorem{corollary}[proposition]{Corollary}
\newtheorem{lemma}[proposition]{Lemma}
\theoremstyle{definition}
\newtheorem{definition}[proposition]{Definition}
\newtheorem{remark}[proposition]{Remark}
\theoremstyle{remark}
\newcommand{\C}{\mathbb{C}}
\newcommand{\ZZ}{\mathbb Z}
\DeclareMathOperator{\id}{id}
\DeclareMathOperator{\Pic}{Pic}
\DeclareMathOperator{\Div}{Div}
\DeclareMathOperator{\rank}{rank}
\DeclareMathOperator{\vol}{vol}
\begin{document}

\title[Fractional quantum numbers]
{Fractional quantum numbers via complex orbifolds}
%	{Spectral orbibundles for 
%	Landau Hamiltonians on complex orbifolds and fractional quantum numbers}

	\author{Varghese Mathai}
	\address{School of Mathematical Sciences, University of Adelaide, Adelaide SA 5005 Australia. }
	\email{mathai.varghese@adelaide.edu.au}
	
\author{Graeme Wilkin}
\address{Department of Mathematics, National University of Singapore, Singapore 119076}
\email{graeme@nus.edu.sg}

\begin{abstract}
This paper studies both the conductance and charge transport on 2D orbifolds in a strong magnetic field.  We consider a family of Landau Hamiltonians on a complex, compact 2D orbifold $Y$ that
are parametrised by the Jacobian torus $J(Y)$ of $Y$. We calculate the degree of the associated
stable holomorphic spectral orbibundles when the magnetic field $B$ is large, and obtain
fractional quantum numbers as the conductance and a refined analysis also gives the charge transport.
A key tool studied here is a nontrivial generalisation of the Nahm transform to 2D orbifolds.
\end{abstract}

\subjclass[2010]{Primary 32L81, Secondary 58J52 58J90 81Q10}
\keywords{fractional quantum numbers, Riemann orbifolds, holomorphic orbibundles, orbifold Nahm transform}

\maketitle

%\tableofcontents

\thispagestyle{empty}

\baselineskip=16pt
%\thispagestyle{plain}
%\setcounter{page}{1}

%\setcounter{section}{-1}

%%%%%%%%%%%%%%%%%%%%%%%%%%%%%%%%%%%%%%%%%%%%%%%%
\section*{Introduction}

The fundamental work of Avron, Seiler, Zograf \cite{ASZ} studies a class of  quantum systems for which the transport
coefficients simultaneously display quantization and fluctuation. More precisely,  these models may be
thought of roughly as describing the quantum dynamics of a 
particle on a two dimensional multiply connected compact surface
which has no boundary. They consider the
non-dissipative transport in
a constant magnetic field and  $2h$ Aharonov-Bohm flux
tubes that thread the $h$ handles of the surface. The two related notions of transport coefficients
that they consider are {\em conductance} and {\em charge transport}.
The mathematical tool used in their work is a local families index theorem due to Quillen
\cite{Quillen, A-GMV}. It splits the conductances
into two parts. The first is explicit and  universal, that is, it 
is, up to an integral  factor, the canonical symplectic form on the space of
Aharonov-Bohm fluxes and is quantized,
 therefore providing a
connection to the Integer Quantum Hall effect (IQHE) \cite{ TKNN, ASY, Bellissard}.
For a deeper analysis of the IQHE, see \cite{KMMW}, where the
generating functional, the adiabatic curvature and the adiabatic phase for the IQHE 
is studied on a compact Riemann surface.
The second piece in the formula is a complete
derivative, hence it does not affect charge transport. It affects however
the conductance as a fluctuation term. In contrast to the first, it
depends on spectral properties of the Hamiltonian, as it is related
to the zeta function regularization of its determinant. In \cite{ASZ}, they study
transport coefficients associated to the ground state for any compact Riemann surface
with $h$ handles and where the magnetic field $B$ is a large constant. In \cite{Prieto06}
the general case is considered, again on a compact Riemann surface. In more recent work,  Klevtsov, S. and Wiegmann, P \cite{KW, KW2}
argue that in addition to the Hall conductance and the nondissipative component of the viscous tensor, there exists a third independent transport coefficient, which is precisely quantized and takes constant values along QHE plateaus. They show that it is the Chern number of a vector bundle over the moduli space of surfaces of genus 2 or higher and therefore cannot change continuously along the QHE plateaus.

In this paper, we both use and generalise the results in \cite{ASZ, Prieto06}.  We study a family of Landau Hamiltonians associated to a holomorphic orbiline bundle $L/G$ 
on a complex orbifold $X/G$, where $X$ is a compact Riemann surface and 
$G$ a finite group acting holomorphically and isometrically on $X$ and $L$ is a holomorphic line bundle over $X$
with an induced holomorphic action of $G$. This family is parametrised by the Jacobian variety of $X/G$ which is 
$J(X)^G$, which is the invariant part of the Jacobian variety of $X$. We next generalise the Nahm transform to 2D orbifolds which 
requires a hypothesis on the action of the finite group $G$; see Remark \ref{rem:no-cyclic-quotients}.
It turns out that the spectral bundles $(\hat P)^G_q \to J(X)^G$ associated 
to holomorphic Landau levels of the Landau Hamiltonians are stable holomorphic vector bundles over  $J(X)^G$.  In fact, the spectral bundle
$(\hat P)^G_q$ is the holomorphic Nahm transform of $K_X^{-q} \otimes L$ for all $q\ge 0$ such that $\text{deg}(K_X^{-q} \otimes L) > \text{deg}(K_X)$.

It is argued  that the degree
of the spectral bundles is equal to each other for any $q$ within a specific range. The conductance is associated to a Fermi level is argued using the quantum adiabatic theorem to be equal to 
an integer multiple of the degree of $(\hat P)^G_q$, which is a fraction described  in general in this paper as being $\frac{1}{|G|}\,c_1(\hat P_q)$, where
the degree of the spectral bundle $\hat P_q$ had been calculated previously by Prieto \cite{Prieto06}. The charge transport in this context is also studied in the paper. We mention that the papers where this setting appears experimentally is the recent work of N. Schine et al in Nature 2016 \cite{Schine16}, and in Nature 2019 \cite{Schine19}, where they construct lowest Landau level/QHE states on conic surfaces with rational angle.

We mention alternate approaches to (fractional) quantum numbers on hyperbolic space. For smooth surfaces \cite{CHMM, CHM99, CHM06}, orbifolds \cite{MM99, MM01, MM06}
bulk-boundary correspondence \cite{MT18}, orbifold symmetric products \cite{MS17}. These papers use operator algebras and noncommutative geometry methods, 
in contrast to the holomorphic geometry methods used in this paper.\\

\noindent{\bf Acknowledgements.} VM thanks the Australian Research Council  for support
via the Australian Laureate Fellowship FL170100020. GW was supported by grant number R-146-000-264-114 from the Singapore Ministry of Education Academic
Research Fund Tier 1. This research was also supported by the Australian Research Council Discovery Project DP170101054. GW would also like to thank the University of Adelaide for their hospitality during the development of this paper.

\section{Background on orbifolds}

\subsection{Orbifolds and orbifold bundles}

An \emph{orbifold Riemann surface} is a compact Riemann surface $Y$ of genus $g_Y$, with marked points $\{ p_1, \ldots, p_n \}$ and an isotropy $m_k \in \mathbb{Z}_{>1}$ associated to each $p_k$. For each marked point $p_k$ there is an \emph{orbifold chart} locally modelled on $\mathbb{C} / \mathbb{Z}_{m_k}$, where $Z_{m_k}$ acts on $\mathbb{C}$ by multiplication by $e^{2\pi i / m_k}$. We use $Y^{orb}$ to denote the smooth surface $Y$ together with the marked points and isotropy data $\{ (p_1, m_1), \ldots, (p_n, m_n) \}$. The \emph{orbifold Euler characteristic} of $Y^{orb}$ is
\begin{equation*}
\chi^{orb}(Y^{orb}) = 2-2g-n + \sum_{k=1}^n \frac{1}{m_k}
\end{equation*}

McOwen \cite{McOwen88} proved that this construction has an interpretation in the context of Riemannian geometry: given marked points and isotropies as above, there exists a metric of constant curvature $K \equiv -1$ such that the metric has cone singularities at the marked points with cone angle $\frac{2\pi}{m_k}$. 

If the orbifold Euler characteristic is negative, then the Riemann existence theorem (see for example \cite{Donaldson11}) guarantees the existence of a compact Riemann surface $X$ and a holomorphic map $\pi : X \rightarrow Y$ such that the marked points are branch points and that $\pi$ has order $m_k$ at each branch point $p_k$. If this ramified covering is Galois, then the finite group $G$ of deck transformations acts holomorphically on $X$, and we can identify $Y^{orb} \cong X/G$ where the orbifold charts at the marked points are induced from the action of $G$ on the charts of $X$. The Riemann surface $X$ also depends on a choice of monodromy data around each marked point, and different choices will produce different pairs $(X, G)$ with the same quotient $Y^{orb} = X/G$. The genus of $X$ is fixed, and is related to the orbifold Euler characteristic of $Y^{orb}$ by
\begin{equation*}
\chi(X) = |G| \cdot \chi^{orb}(Y^{orb}) .
\end{equation*}

A complex \emph{orbifold vector bundle} $E \rightarrow Y^{orb}$ has local trivialisations around each marked point $p_k$ with charts
\begin{equation*}
(U \times \mathbb{C}^r) / \mathbb{Z}_{m_k}
\end{equation*}
where $\mathbb{Z}_{m_k}$ acts on $U$ via the orbifold chart described above, and it acts on $\mathbb{C}^r$ via a choice of monodromy representation $\tau : \mathbb{Z}_{m_k} \rightarrow \mathsf{GL}(r, \mathbb{C})$. Since representations of finite groups can be completely reduced (cf. \cite[Cor. 1.6]{FultonHarris91}), then $\tau$ is equivalent to a choice of integers $(d_{k,1}, \ldots, d_{k,r})$ such that $1 \leq d_{k,j} \leq m_k$ for each $j = 1, \ldots, r$.

\subsection{Equivalence between orbifold bundles and equivariant bundles}

The group $G$ acts by holomorphic diffeomorphisms on $X$, and so it acts by pullback on the Jacobian $J(X)$. Furuta and Steer \cite[Thm. 1.3]{FurutaSteer92} show that orbifold bundles over $X/G$ are in one-to-one correspondence with line bundles over $X$ preserved by the $G$-action. Continuing from this, Nasatyr and Steer \cite{NasatyrSteer95} develop a theory of fractional divisors and prove a correspondence between equivalence classes of these divisors and orbifold line bundles with given monodromy representations around the marked points. To an orbifold line bundle with monodromy $e^{2\pi i d_k}{m_k}$ around each marked point $p_k$ is associated a divisor
\begin{equation*}
D = D_{int} + \sum_{k=1}^n \frac{d_k}{m_k} \cdot p_k ,
\end{equation*}
where $D_{int}$ is an integral divisor on the underlying smooth surface $Y$. Conversely, such a divisor determines an orbifold line bundle $L^{orb} \rightarrow Y^{orb}$ up to equivalence. The \emph{degree} of this line bundle is defined to be the degree of the divisor
\begin{equation*}
\deg D = \deg D_{int} + \sum_{k=1}^n \frac{d_k}{m_k}  .
\end{equation*}

For each marked point $p_k$, let $\{ q_1, \ldots, q_{\ell_k} \} = \pi^{-1}(p_k)$ and define the orbifold line bundle $L_{p_k} \rightarrow Y^{orb}$ to be the quotient of $\mathcal{O}_X[q_1 + \cdots q_{\ell_k}]$ by the action of $G$. Note that $|G| = \ell_k m_k$, and therefore $\deg L_{p_k} = \frac{1}{m_k}$. This leads to a correspondence between holomorphic line bundles $L \rightarrow Y$ and orbifold line bundles  $L^{orb} \rightarrow Y^{orb}$ with given monodromy $(d_1, \ldots, d_n)$. The correspondence is given explicitly by
\begin{equation}\label{eqn:orbifold-holomorphic-correspondence}
L = L^{orb} \otimes L_{p_1}^{-d_1} \otimes \cdots \otimes L_{p_n}^{-d_n} ,
\end{equation}
which determines an isomorphism of sheaves $\mathcal{O}_Y(L) \cong \mathcal{O}_{Y^{orb}}(L^{orb})$ (cf. \cite[Prop. 1.5]{NasatyrSteer95}). 

Therefore we have a correspondence between line bundles on $Y$ of degree $d$ and orbifold line bundles on $Y^{orb} = X/G$ with degree $d^{orb} := d + \sum_{k=1}^n \frac{d_k}{m_k}$, which we write as $\Pic_d(Y) \stackrel{\cong}{\rightarrow} \Pic_{d^{orb}}(Y^{orb})$. 

The group $G$ acts on the Jacobian $J(X)$ by pullback, and \cite[Thm. 1.3]{FurutaSteer92} shows that $\Pic_{d^{orb}}(Y^{orb}) \cong J(X)^G$ . If we suppose that the ramified cover $f : X \rightarrow Y$ does not factorise through an e\'tale cover, then the pullback $f^* : J(Y) \rightarrow J(X)$ is injective (cf. \cite[Prop. 11.4.3]{BirkenhakeLange04}), and so the following map defines an isomorphism of abelian varieties
\begin{align*}
\Pic_d(Y) & \longrightarrow \Pic_{d^{orb}|G|}(X)^G \\
L & \mapsto f^* L \otimes \bigotimes_{k=1}^n \mathcal{O}_X[d_k(q_1 + \cdots + q_{\ell_k})] .
\end{align*}
Fixing divisors of degree $d$ on $Y$ and degree $d^{orb} |G|$ on $X$ gives an isomorphism $J(Y) \cong J(X)^G$.

\subsection{Orbifold Riemann-Roch and isotypic components}

The \emph{canonical bundle} of $Y^{orb}$ is 
\begin{equation*}
K_{Y^{orb}} := K_Y[p_1 + \cdots + p_n] \otimes L_{p_1}^{-1} \otimes \cdots \otimes L_{p_n}^{-1} .
\end{equation*}
The degree of the canonical bundle is then
\begin{equation*}
\deg K_{Y^{orb}} = 2g-2 + n - \sum_{k=1}^n \frac{1}{m_k} = - \chi^{orb}(Y^{orb}) = \frac{1}{|G|} \cdot \deg K_X .
\end{equation*}

%There is also an orbifold version of Serre duality and Riemann-Roch (due to Kawasaki \cite{Kawasaki79})
%\begin{equation*}
%h^0(L^{orb}) - h^0((L^{orb})^* \otimes K_{Y^{orb}}) = 1 - g + \deg L^{orb} - \sum_{k=1}^n \frac{d_k}{m_k} = 1-g + \deg L .
%\end{equation*}
%In particular, if $\deg L^{orb} > \deg K_{Y^{orb}} = 2g-2+n-\sum \frac{1}{m_k}$ then the above formula gives a fixed expression for $h^0(L^{orb})$ in terms of the degree. 

% The \emph{orbifold degree} of the bundle is then
%\begin{equation*}
%\deg^{orb}(L^{orb}) = \deg(\tilde{L})/|G| = \deg(L) + \sum_{j=1}^n \frac{d_j}{m_j} .
%\end{equation*}

Using the correspondence of \cite[Thm. 1.3]{FurutaSteer92}, let $\tilde{L} \rightarrow X$ denote the $G$-equivariant bundle corresponding to $L^{orb} \rightarrow X/G$, and let $L \rightarrow Y$ denote the smooth bundle over the Riemann surface $Y$ corresponding to $X/G$ via the construction of \eqref{eqn:orbifold-holomorphic-correspondence}.
The \emph{orbifold Riemann-Roch} theorem is 
\begin{align}\label{eqn:orbifold-riemann-roch}
\begin{split}
\dim_\C H^0(L^{orb}) - \dim_\C H^1(L^{orb}) & = 1-g + \deg^{orb}(L^{orb}) - \sum_{j=1}^n \frac{d_j}{m_j} \\
 & = 1 - g + \deg(L) .
\end{split}
\end{align}

In particular, the correspondence of sheaves from \cite[Prop. 1.5]{NasatyrSteer95} shows that 
\begin{equation*}
H^j(\tilde{L})^G =: H^j(L^{orb}) \cong H^j(L) .
\end{equation*}

When the degree of $\tilde{L}$ satisfies $\deg \tilde{L} > -\chi(X)$, or equivalently $\deg(L^{orb}) > \chi(Y^{orb})$, then $h^1(L^{orb}) = 0$, in which case the orbifold Riemann-Roch formula \eqref{eqn:orbifold-riemann-roch} computes the dimension of $H^0(\tilde{L})^G = H^0(L^{orb})$. One can also take this a step further and use the Atiyah-Bott fixed point formula \cite{AtiyahBott67}, \cite{AtiyahBott68} to compute the dimensions of the isotypic components.

\subsection{Holomorphic Landau levels}

In this section we show that the holomorphic Landau levels, which correspond to eigensections of the Laplacian on an equivariant bundle, are isomorphic to spaces of holomorphic sections of an associated bundle, in analogy with the non-equivariant case studied by Prieto \cite{Prieto06}, \cite{Prieto06-2}.

From the non-equivariant case (cf. \cite[Sec. 2]{Prieto06}), on the surface $X$ with scalar curvature $R$, there is a Schr\"odinger operator
\begin{equation}\label{eqn:upstairs-schrodinger-operator}
\hat{H} = \frac{\hbar^2}{2m} \left( \nabla^* \nabla + \frac{R}{6} \right)
\end{equation}
defined on the line bundle $\tilde{L} \rightarrow X$. By varying the flat connection, we obtain a Hilbert bundle of sections $\hat{\mathcal{H}} \rightarrow \Pic^0(X)$ with fibre $L^2(X, \tilde{L})$. Prieto \cite[Thm. 7]{Prieto06-2} shows that if $q > 0$ and $\deg (\tilde{L} \otimes K_X^{-q}) > 2g_X-2$, then the eigensections of $\hat{H}$ with eigenvalue $E_q$ given by \cite[Thm. 7]{Prieto06-2} are precisely the holomorphic sections of $\tilde{L} \otimes K_X^{-q}$. Therefore the eigenvalues in this range are discrete and their multiplicity can be computed by Riemann-Roch. The key to this calculation is the following identity for smooth sections $s \in \Omega^0(\tilde{L})$ in terms of the operators $\Delta_{\bar{\partial}} := \bar{\partial}^* \bar{\partial}$ and $\Delta_{\partial} := \partial^* \partial$, and the curvature $F_\nabla$ of the connection $\nabla = \bar{\partial} + \partial$ on $\tilde{L}$
\begin{equation*}
\Delta_{\partial} s - \Delta_{\bar{\partial}} s = i *F_{\nabla} s .
\end{equation*}
Then $\nabla^* \nabla s = 2 \Delta_{\bar{\partial}} s + i * F_{\nabla} s$, and so if the curvature is constant we see that $i*F_\nabla$ is a lower bound for the eigenvalues of $\nabla^* \nabla$, and that if $i*F_\nabla > 0$ then this eigenvalue is achieved if and only if $s$ is holomorphic. The eigenvalue can be computed using the Chern-Weil theory, which shows that $i *F_\nabla = \frac{2\pi \deg(\tilde{L})}{\vol(X)}$. 

If $\deg \tilde{L} > 2g-2$, then the next eigenvalue can also be expressed in this way. Write $\nabla^{(-1)}$ for the induced connection on $\tilde{L} \otimes K_X^{-1}$, $\partial^{\nabla^{(-1)}}$ for the $(1,0)$ part of $\nabla^{(-1)}$ and $\Delta_{\bar{\partial}}^{(-1)}$ for the associated $\bar{\partial}$-Laplacian. On the orthogonal complement of $\ker \bar{\partial}$, we can write $s = \partial^{\nabla^{(-1)}} s'$ for some $s' \in \tilde{L} \otimes K_X^{-1}$. The identity
\begin{equation*}
\Delta_{\bar{\partial}} \circ \partial^{\nabla^{(-1)}} = \partial^{\nabla^{(-1)}} \circ \Delta_{\bar{\partial}}^{(-1)} + i \partial^{\nabla^{(-1)}} * F_{\nabla^{(-1)}}
\end{equation*}
then shows that $s = \partial^{\nabla^{(-1)}} s'$ is an eigensection of $\nabla^* \nabla$ with eigenvalue $2 i*F_{\nabla^{(-1)}} + i*F_{\nabla}$ if and only if $s' \in \Omega^0(\tilde{L} \otimes K_X^{-1})$ is holomorphic. The dimension of the space of eigensections can then be computed using Riemann-Roch. Repeating this process shows that if $\deg (\tilde{L} \otimes K_X^{-q}) > 0$ for some $q \in \mathbb{Z}_{>0}$, then the low-lying eigenvalues of $\nabla^* \nabla$ are given by
\begin{equation}\label{eqn:low-eigenvalue}
E_\ell :=  \frac{2\pi}{\vol(X)}\left( (2 \ell + 1) \deg \tilde{L} - \ell(\ell+1) (2g-2) \right) = i*F_\nabla + 2 \sum_{k=1}^\ell i*F_{\nabla^{-k}} .
\end{equation}
for any $0 \leq \ell \leq q$, and the multiplicity of each eigenvalue can be computed using Riemann-Roch.

On restricting to the $G$-equivariant setting, the Schr\"odinger operator \eqref{eqn:upstairs-schrodinger-operator} on a $G$-invariant bundle $\tilde{L} \rightarrow X$ descends to the orbifold $Y^{orb} = X/G$, and the $G$-invariant eigensections become $G$-invariant sections of the orbifold bundle $L^{orb} \rightarrow Y^{orb}$. The $G$-invariant holomorphic sections of $\tilde{L} \otimes K_X^{-q}$ correspond to the $G$-invariant eigensections of the Schr\"odinger operator with the same eigenvalue $E_q$ given by \eqref{eqn:low-eigenvalue}, and the multiplicity can be computed using the orbifold Riemann-Roch theorem \eqref{eqn:orbifold-riemann-roch}. Let $L_q \rightarrow Y$ be the smooth bundle associated to $L^{orb} \otimes K_{Y^{orb}}^{-q}$ by the correspondence of \eqref{eqn:orbifold-holomorphic-correspondence}. Note that if $L \rightarrow Y$ is the smooth bundle associated to $L^{orb}$, then
\begin{equation*}
\deg (L^{orb} \otimes K_{Y^{orb}}^{-q}) = \deg L + \sum_{k=1}^n \frac{d_k}{m_k} - q(2g-2) - qn + \sum_{k=1}^n \frac{q}{m_k} ,
\end{equation*}
and so 
\begin{equation*}
\deg L_q = \deg L - q(2g-2) - qn + \sum_{k=1}^n \left\lfloor \frac{d_k+q}{m_k} \right\rfloor ,
\end{equation*}
where the floor function $\lfloor x \rfloor$ denotes the largest integer less than or equal to $x$. The orbifold Riemann-Roch theorem then shows that if $\deg (\tilde{L} \otimes K_X^{-(q+1)}) > 0$, then the multiplicity $M_q$ of the eigenvalue $E_q$ is
\begin{equation*}
M_q = 1-g + \deg L_q. 
\end{equation*}

We then have the following orbifold version of \cite[Thm. 5]{Prieto06}. The same proof as \cite[Lem. 1]{Prieto06} then shows that we can apply the adiabatic theorem in this setting.

\begin{lemma}\label{lem:orbifold-eigensections}
Let $L^{orb} \rightarrow Y^{orb} = X/G$ be an orbifold line bundle. If $\deg^{orb}(L^{orb}) > -\chi^{orb}(Y^{orb}) = \frac{1}{|G|} (2g_X-2)$, then the orbifold eigensections of the Schr\"odinger operator \eqref{eqn:upstairs-schrodinger-operator} with eigenvalue $E_q$ given by \eqref{eqn:low-eigenvalue} correspond to orbifold holomorphic sections of $L^{orb} \otimes K_{Y^{orb}}^{-q}$.
\end{lemma}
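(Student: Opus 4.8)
The plan is to deduce the orbifold statement from the non-equivariant analysis on the covering surface $X$ by passing to $G$-invariants, using the equivalence between orbifold objects on $Y^{orb} = X/G$ and $G$-equivariant objects on $X$. All of the analytic input has already been assembled on $X$: the curvature identity $\nabla^*\nabla s = 2\Delta_{\bar{\partial}}s + i*F_\nabla s$, its iterated form through the operators $\partial^{\nabla^{(-k)}}$, and the resulting identification of the eigensections of $\nabla^*\nabla$ for the eigenvalue $E_q$ of \eqref{eqn:low-eigenvalue} with the holomorphic sections of $\tilde{L}\otimes K_X^{-q}$. The task is therefore to check that this identification is $G$-equivariant and then to restrict to invariants.

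First I would record that, since $G$ acts holomorphically and isometrically on $X$ and the metric and Chern connection on the $G$-invariant bundle $\tilde{L}$ are $G$-invariant, the operators $\nabla^*\nabla$, $\Delta_{\bar{\partial}}$, $\partial^{\nabla^{(-k)}}$ and the curvature $*F_\nabla$ all commute with the induced action of $G$ on sections. In particular $\hat{H}$ is a $G$-equivariant self-adjoint operator, so each eigenspace is a finite-dimensional unitary $G$-representation and the spectral decomposition is a decomposition of $G$-modules. The separation of the low-lying eigenvalues $E_0 < E_1 < \cdots$ guaranteed on $X$ by the degree hypothesis then ensures that the $G$-invariant part of the $E_q$-eigenspace is exactly the space of $G$-invariant eigensections with eigenvalue $E_q$, with no mixing between distinct Landau levels.

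Next I would take $G$-invariants of the equivariant isomorphism between the $E_q$-eigenspace and $H^0(\tilde{L}\otimes K_X^{-q})$, which yields
\begin{equation*}
\{\,\text{$G$-invariant eigensections of $\hat{H}$ with eigenvalue $E_q$}\,\} \;\cong\; H^0(\tilde{L}\otimes K_X^{-q})^G .
\end{equation*}
By the correspondence of sheaves \cite[Prop. 1.5]{NasatyrSteer95} recalled above, $H^0(\tilde{L}\otimes K_X^{-q})^G \cong H^0(L^{orb}\otimes K_{Y^{orb}}^{-q})$, the space of orbifold holomorphic sections. Because $G$ acts isometrically, the metric descends to the constant-curvature cone metric on $Y^{orb}$ and $\hat{H}$ descends to the orbifold Schr\"odinger operator, so by the orbifold dictionary the orbifold eigensections on $Y^{orb}$ are precisely the $G$-invariant eigensections on $X$; composing the identifications gives the claim. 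The degree hypothesis translates directly: using $\deg K_X = |G|\deg K_{Y^{orb}}$ and $\deg\tilde{L} = |G|\deg^{orb}(L^{orb})$, the bound $\deg^{orb}(L^{orb}) > -\chi^{orb}(Y^{orb}) = \tfrac{1}{|G|}(2g_X-2)$ is equivalent to Prieto's bound $\deg\tilde{L} > 2g_X-2 = -\chi(X)$ on the cover, with the corresponding twisted bound $\deg(\tilde{L}\otimes K_X^{-(q+1)})>0$ controlling the level $q$.

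I expect the main obstacle to be bookkeeping rather than a genuine analytic difficulty. One must verify that ``orbifold eigensection'' and ``orbifold holomorphic section'' are the correct invariant-theoretic notions, that is, that descent to the cone metric on $Y^{orb}$ neither loses nor adds spectrum, and that the equivariant structure on $\tilde{L}\otimes K_X^{-q}$ (in particular the monodromy weights at the branch points) matches the orbifold bundle $L^{orb}\otimes K_{Y^{orb}}^{-q}$ under \eqref{eqn:orbifold-holomorphic-correspondence}. The former is handled by the isometric $G$-action, and the latter by the Furuta--Steer and Nasatyr--Steer dictionary already set up in Section 1, so the only real care is in matching the twisting bundles $L_q$ and the floor-function shifts in the degree computation preceding the statement.
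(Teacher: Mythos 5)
Your proposal is correct and takes essentially the same route as the paper: the paper's argument (given in the paragraph preceding the lemma) is precisely to note that the $G$-equivariant Schr\"odinger operator and Prieto's identification of the $E_q$-eigenspace with $H^0(\tilde{L}\otimes K_X^{-q})$ descend to the orbifold upon passing to $G$-invariants, with the Furuta--Steer/Nasatyr--Steer dictionary converting invariant sections on $X$ into orbifold sections of $L^{orb}\otimes K_{Y^{orb}}^{-q}$. Your additional remarks on the $G$-module structure of the eigenspaces and the translation of the degree bounds are consistent with, and slightly more explicit than, the paper's treatment.
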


\begin{corollary}\label{cor:orbifold-adiabatic}
In the orbifold setting, the conditions (A1)--(A3) from \cite[Sec. 3]{Prieto06} are satisfied. In particular, the low-lying eigenvalues are separated by a spectral gap and the projection operator to the eigenspaces is of finite rank.
\end{corollary}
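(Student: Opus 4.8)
The plan is to verify the three adiabatic hypotheses by transporting the argument of \cite[Lem. 1]{Prieto06} to the $G$-equivariant operator on $X$ and then reading off the orbifold statement on the trivial isotypic component, using the explicit eigenvalues \eqref{eqn:low-eigenvalue}, Lemma \ref{lem:orbifold-eigensections} and orbifold Riemann-Roch \eqref{eqn:orbifold-riemann-roch} in place of their smooth counterparts. The organising observation is that \eqref{eqn:upstairs-schrodinger-operator} commutes with the $G$-action, so $\nabla^*\nabla$ preserves the isotypic decomposition of $L^2(X,\tilde L)$ and the orbifold operator on $Y^{orb}$ is exactly its restriction to the invariant summand. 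In particular every spectral estimate proved on $X$ restricts to the invariant part, and since restricting a self-adjoint operator to an invariant closed subspace can only shrink the spectrum, gaps established upstairs persist downstairs.

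For smoothness of the family (A1), the parameter space $J(X)^G$ labels the flat orbifold line bundles tensored into $L^{orb}$; under \cite[Thm. 1.3]{FurutaSteer92} these lift to a real-analytic family of $G$-invariant flat connections $\nabla + a$ on $\tilde L$. Because $a$ is a smooth harmonic one-form on the compact surface $X$, the perturbation $(\nabla+a)^*(\nabla+a) - \nabla^*\nabla$ is relatively bounded with respect to $\nabla^*\nabla$, so the operators form a real-analytic family of self-adjoint operators with common domain. Restriction to $G$-invariant sections is continuous and hence preserves this regularity on $Y^{orb}$.

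For the spectral gap (A2), the key point is that tensoring by a flat bundle alters neither the curvature $i*F_\nabla$ nor $\deg\tilde L$, so the eigenvalues $E_\ell$ of \eqref{eqn:low-eigenvalue} are constant over $J(X)^G$; consequently the consecutive gaps
\begin{equation*}
E_{\ell+1}-E_\ell = \frac{4\pi}{\vol(X)}\bigl(\deg\tilde L - (\ell+1)\deg K_X\bigr)
\end{equation*}
are uniform in the parameter, and are strictly positive exactly when $\deg(\tilde L\otimes K_X^{-(\ell+1)})>0$, the hypothesis appearing in Lemma \ref{lem:orbifold-eigensections}. The iterated lower-bound identity $\nabla^*\nabla s = 2\Delta_{\bar\partial}s + i*F_\nabla s$ shows that below $E_q$ the spectrum is exhausted by $E_0<\cdots<E_q$, so the band is separated from the rest of the spectrum by a gap bounded away from zero. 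The same degree condition forces, by Serre duality, $h^1(\tilde L\otimes K_X^{-\ell})=0$ for $0\le\ell\le q$, whence $h^0$ is constant across the torus and no eigenvalue can flow into or out of the band as the parameter varies.

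For finite rank (A3), Lemma \ref{lem:orbifold-eigensections} identifies the $E_\ell$-eigenspace on $Y^{orb}$ with $H^0(\tilde L\otimes K_X^{-\ell})^G = H^0(L^{orb}\otimes K_{Y^{orb}}^{-\ell})$, whose dimension $M_\ell = 1-g+\deg L_\ell$ is finite by \eqref{eqn:orbifold-riemann-roch}; the band projection therefore has finite rank $\sum_{\ell=0}^q M_\ell$, constant by the vanishing of $h^1$, so it varies smoothly with the parameter. I expect the main obstacle to be the middle claim of (A2): confirming that the lower-bound argument continues to exhaust the low-lying spectrum after passing to $G$-invariant sections and that the gap is genuinely uniform over all of $J(X)^G$. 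Since the $G$-action commutes with $\nabla^*\nabla$ the isotypic projection is spectral, so the exhaustion argument descends verbatim to the invariant summand, and the constancy of $h^0$ supplies the uniformity.
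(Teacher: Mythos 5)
Your proposal is correct and follows essentially the same route as the paper, which simply invokes the argument of \cite[Lem. 1]{Prieto06} together with Lemma \ref{lem:orbifold-eigensections}: the orbifold operator is the restriction of the $G$-equivariant Schr\"odinger operator to the invariant isotypic summand, the gaps come from the explicit parameter-independent eigenvalues \eqref{eqn:low-eigenvalue}, and finite rank comes from orbifold Riemann--Roch. Your write-up merely makes explicit the descent-to-invariants mechanism and the gap computation $E_{\ell+1}-E_\ell$ that the paper leaves implicit.
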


\section{Fourier-Mukai transform of an orbifold line bundle}

Using the notation and setup of the previous section, let $G$ be a finite group acting on $X$ by holomorphic diffeomorphisms, and let $\pi : X \rightarrow X / G$ denote the quotient map. In addition, we assume that if $G' \subset G$ is a normal subgroup such that $Q := G / G'$ is cyclic, then $Q$ does not act freely on $X / G'$. The group $G$ then acts on the Jacobian $J(X)$ by pullback. We also fix monodromy data around the points with nontrivial isotropy group to fix the correspondence \eqref{eqn:orbifold-holomorphic-correspondence}.

Let $Y$ be the smooth Riemann surface underlying the quotient $X / G$. The above assumption on the action of a cyclic quotient group $Q$ implies that the ramified cover $f : X \rightarrow Y$ does not factorise through an \'etale cover (cf. \cite[Prop. 11.4.3]{BirkenhakeLange04}), therefore the pullback $f^* : J(Y) \rightarrow J(X)^G$ is an isomorphism of abelian varieties. 
\begin{equation*}
\begin{tikzcd}[row sep=small]
 & X \arrow{dl}[swap]{\pi} \arrow{dr}{f} & \\
X/G \arrow{rr} & & Y
\end{tikzcd}
\end{equation*}

The Poincare bundle $\mathcal{P}_X \rightarrow X \times J(X)$ is uniquely characterised by the property that the restriction to $X \times \{L\}$ is the bundle $L \rightarrow X$. The pullback $i^* \mathcal{P}_X$ by the inclusion $i : J(X)^G \hookrightarrow J(X)$ satisfies the same property for $G$-invariant bundles. Therefore $i^* \mathcal{P}_X$ is the pullback of the orbifold Poincar\'e bundle $\mathcal{P}_{X/G} \rightarrow X/G \times J(X)^G$ by the projection $\pi \times \id : X \times J(X)^G \rightarrow X/G \times J(X)^G$. 

Using the identification $J(Y) \cong J(X)^G$, we see that $i^* \mathcal{P}_X = (f \times \id)^* \mathcal{P}_Y$. Therefore $\mathcal{P}_{X/G}$ is the pullback of $\mathcal{P}_Y$ by the cartesian product of the smoothing map $X/G \rightarrow Y$ with the isomorphism $J(X)^G \cong J(Y)$. In particular, this defines an isomorphism of Poincar\'e bundles $\mathcal{P}_{X/G} \cong \mathcal{P}_Y$. 
\begin{equation*}
{\footnotesize
\begin{tikzcd}[row sep=small]
 & i^* \mathcal{P}_X \arrow{dl} \arrow{dr} \arrow{dd} & \\
\mathcal{P}_{X/G} \arrow{dd} \arrow[dashrightarrow,crossing over]{rr} & & \mathcal{P}_Y \arrow{dd} \\
 & X \times J(X)^G \arrow{dl}[swap]{\pi \times \id} \arrow{dr}{f \times \id} & \\
X/G \times J(X)^G \arrow{rr} & & Y \times J(Y)
\end{tikzcd}}
\end{equation*}

Now we compare the different Fourier-Mukai transforms associated to these Poincar\'e bundles. First consider a $G$-invariant line bundle $\tilde{L} \rightarrow X$, with $\deg \tilde{L} > \chi(X)$.
\begin{equation*}
\begin{tikzcd}[row sep=small]
 & X \times J(X)^G \arrow{dl}[swap]{p_1} \arrow{dr}{p_2} & \\
X & & J(X)^G
\end{tikzcd}
\end{equation*}

\begin{definition}
The \emph{$G$-equivariant Fourier-Mukai transform} of $\tilde{L}$ is 
\begin{equation*}
\hat{\tilde{L}}^G := (p_2)_* (p_1^* \tilde{L} \otimes i^* \mathcal{P}_X) .
\end{equation*}
\end{definition}

This is a bundle over $J(X)^G$, where the fibre over $L' \in J(X)^G$ is the space of $G$-invariant sections $H^0(\tilde{L} \otimes L')^G$. 

Using the correspondence \eqref{eqn:orbifold-holomorphic-correspondence} from the previous section, the $G$-invariant bundle $\tilde{L} \rightarrow X$ corresponds to an orbifold line bundle $L^{orb} \rightarrow X/G$, which is isomorphic as a sheaf to a holomorphic bundle $L \rightarrow Y$, satisfying $\deg L > -\chi(Y)$. The ordinary Fourier-Mukai transform of $L$ is the spectral bundle $\hat{L} \rightarrow J(Y) \cong J(X)^G$, where the fibre over $L'' \in J(Y)$ is the space of sections $H^0(L \otimes L'')$. Let $L' \in J(X)^G$ be the bundle corresponding to $L''$ under the isomorphism $J(Y) \cong J(X)^G$. Then the correspondence \eqref{eqn:orbifold-holomorphic-correspondence} shows that sections of $L \otimes L''$ correspond to orbifold sections of $L \otimes L'$, which are $G$-invariant sections of $\tilde{L} \otimes \pi^* L'$. Therefore the two spectral bundles $\hat{L} \rightarrow J(Y)$ and $\hat{L}^G \rightarrow J(X)^G$ have isomorphic fibres. 

Now we extend this to an isomorphism of bundles. Given an open set $U \subset J(Y) \cong J(X)^G$, the sections of $\mathcal{O}_{\hat{L}}(U)$ are by definition the sections of $p_1^* L \otimes \mathcal{P}_Y$ over $p_2^{-1}(U)$. These are orbifold sections of $p_1^* L^{orb} \otimes \mathcal{P}_{X/G}$, or equivalently $G$-invariant sections of $p_1^* \tilde{L} \otimes i^* \mathcal{P}_X$ over $p_2^{-1}(U)$. Therefore the spectral bundles $\hat{L} := (p_2)_* (L \otimes \mathcal{P}_Y) \rightarrow J(Y)$ and $\hat{L}^G := (p_2)_* (L^{orb} \otimes \mathcal{P}_{X/G}) \rightarrow J(X)^G$ are isomorphic, since their sheaves of sections are isomorphic.

In particular, their degrees and ranks are the same. The calculation from \cite[Prop. 4]{Prieto06} then shows that
\begin{equation*}
c_1(\hat{L}) = - [\Theta_Y] \in H^2(J(Y), \mathbb{Z}) ,
\end{equation*}
and Lemma 12.3.1 of \cite{BirkenhakeLange04} shows that $[\Theta_Y] = \frac{1}{|G|} i^*[\Theta_X]$. Therefore we have proved
\begin{proposition}\label{prop:equivariant-spectral-c1}
Let $X$ be a compact Riemann surface and $G$ a finite group acting on $X$ by holomorphic diffeomorphisms, such that if $G' \subset G$ is a normal subgroup such that $Q := G / G'$ is cyclic, then $Q$ does not act freely on $X / G'$. Let $L^{orb} \rightarrow X / G$ be an orbifold line bundle of degree greater than $-\chi^{orb}(X/G)$. Then the rank of the spectral bundle $\hat{L}^G \rightarrow J(X)^G$ is given by \eqref{eqn:orbifold-riemann-roch}
\begin{equation*}
\rank(\hat{L}^G) = 1-g+\deg^{orb}(L^{orb}) - \sum_{j=1}^n \frac{d_j}{m_j} = 1-g + \deg L
\end{equation*}
and the first Chern class is
\begin{equation*}
c_1(\hat{L}^G) = -\frac{1}{|G|} i^*[\Theta_X] \in H^2(J(X)^G, \mathbb{Z}).
\end{equation*}
\end{proposition}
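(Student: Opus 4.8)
The plan is to deduce both assertions directly from the holomorphic isomorphism of spectral bundles $\hat{L} \cong \hat{L}^G$ established in the discussion preceding the statement, using it to transport Prieto's non-equivariant computations on $J(Y)$ across the isomorphism $f^* : J(Y) \cong J(X)^G$. Since the degree hypothesis $\deg^{orb}(L^{orb}) > -\chi^{orb}(Y^{orb})$ is exactly the condition that appeared earlier to guarantee $h^1 = 0$, both the rank and the Chern class should follow from already-assembled pieces; the work is in organising them and tracking the polarization data.

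First I would establish the rank formula. The fibre of $\hat{L}^G$ over $L' \in J(X)^G$ is the space of $G$-invariant holomorphic sections $H^0(\tilde{L} \otimes \pi^* L')^G$, which by the sheaf correspondence \eqref{eqn:orbifold-holomorphic-correspondence} together with \cite[Prop. 1.5]{NasatyrSteer95} is isomorphic to $H^0(L^{orb} \otimes L')$. Because twisting by the degree-zero bundle $L'$ preserves the inequality $\deg^{orb}(L^{orb}) > -\chi^{orb}(Y^{orb})$, the vanishing $h^1(L^{orb} \otimes L') = 0$ holds at every point of the base, so the dimension of the fibre is constant and the rank is well defined. Orbifold Riemann--Roch \eqref{eqn:orbifold-riemann-roch} then evaluates this dimension as $1-g+\deg^{orb}(L^{orb}) - \sum_{j=1}^n \frac{d_j}{m_j} = 1-g+\deg L$, giving the stated rank.

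Next I would compute the first Chern class. By \cite[Prop. 4]{Prieto06}, the ordinary Fourier--Mukai transform on $J(Y)$ satisfies $c_1(\hat{L}) = -[\Theta_Y]$. Since $\hat{L} \cong \hat{L}^G$ as holomorphic bundles over bases identified by $f^*$, this class transports to $c_1(\hat{L}^G) = -[\Theta_Y]$ read as a class on $J(X)^G \cong J(Y)$. The final step rewrites $[\Theta_Y]$ upstairs: Lemma 12.3.1 of \cite{BirkenhakeLange04} identifies the polarization induced on the abelian subvariety $f^* J(Y) = J(X)^G \subset J(X)$ with $\frac{1}{|G|}$ times the restriction of the principal polarization of $J(X)$, so that $[\Theta_Y] = \frac{1}{|G|} i^*[\Theta_X]$. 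Combining the two identities yields $c_1(\hat{L}^G) = -\frac{1}{|G|} i^*[\Theta_X] \in H^2(J(X)^G, \mathbb{Z})$.

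The main obstacle I expect is the bookkeeping around this theta-divisor identity and the compatibility of polarizations under $f^*$. One must be sure that the isomorphism of spectral bundles is genuinely an isomorphism of holomorphic bundles over \emph{identified} bases, so that integral cohomology classes transport faithfully, and that the factor $\frac{1}{|G|}$ arises correctly from the degree of the ramified cover. This is precisely where the standing hypothesis forbidding a freely acting cyclic quotient $Q = G/G'$ is needed: it ensures $f$ does not factorise through an \'etale cover, so by \cite[Prop. 11.4.3]{BirkenhakeLange04} the pullback $f^*$ is injective and $J(X)^G$ is an abelian subvariety carrying the pulled-back polarization to which Lemma 12.3.1 applies. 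By contrast, the rank formula is immediate once the fibrewise vanishing $h^1 = 0$ is in place.
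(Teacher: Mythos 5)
Your proposal is correct and follows essentially the same route as the paper: identify the fibres of $\hat{L}^G$ with $H^0(L^{orb}\otimes L')$ via the sheaf correspondence, apply orbifold Riemann--Roch with $h^1=0$ for the rank, then transport Prieto's computation $c_1(\hat{L}) = -[\Theta_Y]$ across the isomorphism $\hat{L}\cong\hat{L}^G$ and invoke Lemma 12.3.1 of Birkenhake--Lange for $[\Theta_Y] = \frac{1}{|G|}i^*[\Theta_X]$. Your remarks on the fibrewise vanishing of $h^1$ and on the role of the no-free-cyclic-quotient hypothesis in making $f^*$ injective are accurate and consistent with the paper's discussion.
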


\begin{remark}\label{rem:no-cyclic-quotients}
The condition that cyclic quotient groups $Q = G / G'$ do not act freely guarantees that the ramified covering $f : X \rightarrow Y$ does not factor through a cyclic \'etale cover $f' : X' \rightarrow Y$, and hence $f^* : J(Y) \rightarrow J(X)^G$ is an isomorphism. The existence of such an \'etale cover implies that $f^*$ is not injective (cf. \cite[Prop. 11.4.3]{BirkenhakeLange04}). 

A simple example to illustrate this is given as follows. Let $Y = \C / (\mathbb{Z} + \tau \mathbb{Z})$ and $X = \C / (n \mathbb{Z} + \tau \mathbb{Z})$ be two elliptic curves, with $n$-fold covering $p : X \rightarrow Y$ induced from the identity map on $\C$. Let $D_\ell := \sum_k m_k y_k \in \Div(Y)$ with $\deg D_\ell = \sum_k m_k = 0$ be a divisor in the equivalence class $\sum_k m_k y_k = a + \frac{\ell}{n} \tau \in J(Y) \cong Y$. The pullback is $p^* D_\ell = \sum_k m_k (y_k^{(1)} + \cdots + y_k^{(n)}) \in \Div(X)$ with $y_k^{(j)} = y_k^{(1)} + j-1$ for each $j = 1, \ldots, n$. Then the equivalence class in the Jacobian is
\begin{equation*}
p^* D_\ell = n \sum_k m_k y_k + (1 + \cdots + n-1) \sum_k m_k = n \sum_k m_k y_k = na + \ell \tau \sim na \in J(X) ,
\end{equation*}
which is independent of the choice of $\ell = 0, \ldots, n-1$, and therefore $p^* : J(Y) \rightarrow J(X)^G$ is not injective.
\end{remark}

\section{Mean transport and conductivity on an orbifold}

Prieto \cite[Thm. 7]{Prieto06} gives a geometric formula for the mean adiabatic transport $\left< Q_{Ad} \right>$ and mean Hall conductivity $\left< \sigma_H \right>$ in terms of the geometry of the Jacobian of the underlying surface. In order to apply Kato's adiabatic theorem, the assumptions (A1)--(A3) from \cite[Sec. 3]{Prieto06} are needed, in particular the existence of spectral gaps and finite rank eigenspaces, which Corollary \ref{cor:orbifold-adiabatic} shows are satisfied in the orbifold setting. Then the adiabatic transport is asymptotic to the mean adiabatic transport $\left< Q_{Ad}(\hat{L})^G \right>$ (cf. \cite[Thm. 2]{Prieto06}), which we now compute, following 
\cite{Prieto06} and our earlier constructions.

\begin{theorem}
Let $X$ be a compact Riemann surface and $G$ a finite group acting on $X$ satisfying the same conditions as Proposition \ref{prop:equivariant-spectral-c1}. Let $p : X \rightarrow Y$ denote the associated ramified covering map. For $\beta, \delta \in H_1(Y) \cong H_1(J(Y)) \cong H_1(J(X)^G) \stackrel{i_*}{\hookrightarrow} H_1(J(X)) \cong H_1(X)$ we have
\begin{align}
\left< Q_{Ad}(\hat{L})^G \right> (\beta, \delta) & = -\frac{e}{|G|} (i_* \beta, i_* \delta)_X \label{eqn:orbifold-charge-transport} \\
\left< \sigma_H(\hat{L})^G \right> (\beta, \delta) & = - \frac{1}{|G|} \frac{e^2}{h}(i_* \beta, i_* \delta)_X . \label{eqn:orbifold-mean-conductivity}
\end{align}

\end{theorem}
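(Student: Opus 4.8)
The plan is to reduce the equivariant statement to Prieto's computation on the smooth surface $Y$ and then to transport the answer across the isomorphism $J(Y)\cong J(X)^G$, keeping careful track of the factor $\tfrac{1}{|G|}$ that arises from the comparison of theta divisors. The whole argument is conceptual rather than computational: the substance has already been packaged into the isomorphism of spectral bundles from Section 2 and into Proposition \ref{prop:equivariant-spectral-c1}.

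First I would invoke the identification of spectral bundles $\hat{L}^G \cong \hat{L}$ over $J(X)^G \cong J(Y)$, together with Corollary \ref{cor:orbifold-adiabatic}, which guarantees that assumptions (A1)--(A3) hold so that Kato's adiabatic theorem applies in the orbifold setting. Consequently the adiabatic transport and Hall conductivity of the $G$-equivariant family coincide with those of the smooth family on $Y$, so it suffices to compute the latter by Prieto's \cite[Thm. 7]{Prieto06}. That theorem expresses both quantities in terms of the first Chern class (equivalently the theta class) of the spectral bundle, namely $\left< Q_{Ad}(\hat{L}) \right>(\beta,\delta) = -e\,[\Theta_Y](\beta,\delta)$ and $\left< \sigma_H(\hat{L}) \right>(\beta,\delta) = -\tfrac{e^2}{h}\,[\Theta_Y](\beta,\delta)$, where $[\Theta_Y]$ is paired against the $2$-cycle determined by $\beta,\delta \in H_1(J(Y))$ and represents the intersection form on $H_1(Y)$.

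The remaining step is to rewrite $[\Theta_Y](\beta,\delta)$ in terms of data on $X$. By Proposition \ref{prop:equivariant-spectral-c1} (via Lemma 12.3.1 of \cite{BirkenhakeLange04}) we have $[\Theta_Y] = \tfrac{1}{|G|}\,i^*[\Theta_X]$. Since the theta class represents the principal polarization, pairing against cycles and using the adjunction between $i^*$ and $i_*$ gives
\begin{equation*}
[\Theta_Y](\beta,\delta) = \tfrac{1}{|G|}\,(i^*[\Theta_X])(\beta,\delta) = \tfrac{1}{|G|}\,[\Theta_X](i_*\beta, i_*\delta) = \tfrac{1}{|G|}\,(i_*\beta, i_*\delta)_X ,
\end{equation*}
which, substituted into the two formulas above, yields \eqref{eqn:orbifold-charge-transport} and \eqref{eqn:orbifold-mean-conductivity}.

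The main obstacle is making this last step fully precise: one must verify that under the chain of identifications $H_1(Y)\cong H_1(J(Y))\cong H_1(J(X)^G)\stackrel{i_*}{\hookrightarrow}H_1(J(X))\cong H_1(X)$ the principal polarization of $J(Y)$ is exactly the restriction of $\tfrac{1}{|G|}$ times that of $J(X)$. This is where the hypothesis on cyclic quotients is essential: it ensures $f^*:J(Y)\to J(X)^G$ is an \emph{isomorphism} rather than merely an isogeny, so that the pullback of polarizations is governed by Lemma 12.3.1 of \cite{BirkenhakeLange04} with the clean scaling factor $|G|$ rather than some proper divisor of it. Checking that $i_*$ is the adjoint of $i^*$ with respect to the two polarizations, and that the intersection pairing on $H_1$ is indeed represented by the theta class, are the points that require care; once these compatibilities are established, the result follows by direct substitution.
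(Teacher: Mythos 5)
Your proposal is correct and follows essentially the same route as the paper: both arguments rest on the isomorphism of spectral bundles, Corollary \ref{cor:orbifold-adiabatic} to justify the adiabatic theorem, and the expression of the mean transport as the pairing of $c_1(\hat{L}^G) = -\tfrac{1}{|G|} i^*[\Theta_X]$ against Pontryagin products of $1$-cycles, pushed into $J(X)$ via $i_*$ to produce the intersection form on $H_1(X)$. The only cosmetic difference is that you first transfer the computation to $Y$ via Prieto's Theorem 7 and then rescale $[\Theta_Y]$, whereas the paper works directly on $J(X)^G$ with the already-established Chern class formula of Proposition \ref{prop:equivariant-spectral-c1}; the content is identical.
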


\begin{proof}
A symplectic basis $\{ \alpha_1, \ldots, \alpha_{2g} \}$ for $H_1(Y)$ determines a dual basis for $H^1(Y, \mathbb{Z})$ which determines a basis $\{ \gamma_1, \ldots, \gamma_{2g} \}$ for $H_1(J(Y)) \cong H_1(J(X)^G) \hookrightarrow H_1(J(X))$. The Pontryagin product is denoted $\gamma_k \star \gamma_j \in H_2(J(X)^G)$. On $J(X)^G$, the mean adiabatic transport is written in terms of the curvature of the spectral bundle
\begin{equation*}
\left< Q_{Ad}(\hat{L}_q^G) \right>(\alpha_k, \alpha_j) = e \frac{i}{2\pi} \int_{\gamma_k \star \gamma_j} \Omega^{\hat{L}_q^G} .
\end{equation*}
This integral can be written in terms of the first Chern class from Proposition \ref{prop:equivariant-spectral-c1}.
\begin{equation*}
e \frac{i}{2\pi} \int_{\gamma_k \star \gamma_j} \Omega^{\hat{L}_q^G} = e \int_{\gamma_k \star \gamma_j} c_1(\hat{L}_q^G) = -\frac{e}{|G|} \int_{\gamma_k \star \gamma_j} i^* [\Theta_X] = -\frac{e}{|G|} \int_{i_*(\gamma_k) \star i_*(\gamma_j)} [\Theta_X].
\end{equation*}
Therefore, for any $\beta, \delta \in H_1(Y) \cong H_1(J(Y)) \cong H_1(J(X)^G)$ we have
\begin{equation*}
\left< Q_{Ad}(\hat{L}_q^G) \right>(\beta, \delta) = \frac{e}{|G|} (i_* \beta, i_* \delta)_X ,
\end{equation*}
where $( \cdot, \cdot )_X$ is the intersection form on $H_1(X)$, isomorphic to the pairing on $H_1(J(X)) \cong H_1(X)$ determined by the theta divisor. The second identity is proved similarly (cf. \cite[Rem. 5 \& Thm. 7]{Prieto06}).
\end{proof}

\section*{Conclusions and outlook}

Both the papers by Avron, Seiler, Zograf \cite{ASZ} and Prieto \cite{Prieto06} assume that the magnetic field $B$ is an integral 2-form on 
the compact Riemann surface $X$, such that $\int_X B>>0$ is large positive. We relax the integrality assumption in this paper where
$\int_X B \in \ZZ/|G|$ is rational but also assume that $\int_X B>>0$ is large positive.  Ideally, we would like to remove the rationality constraint on 
 $\int_X B $ and we are discussing future work related to this. We also would like to extend our conclusions to finite volume Riemann surfaces.
 
Another goal is to generalise the Hamiltonians considered here, by for instance adding an electic potential term. This would allow for crossings in the 
 spectral orbibundles, whose analysis could be related to the study of semimetals (cf. \cite{MT171, MT172} in the Euclidean setting, and the fundamental article
 \cite{TV} and review article \cite{AMV}) in the holomorphic setting.

%\bibliographystyle{plain}
%\bibliography{../../ref}

\end{document}